\theoremstyle{plain}
\newtheorem{thm}{Theorem}
  \theoremstyle{definition}
  \newtheorem*{thm*}{Theorem}
  \theoremstyle{remark}
  \newtheorem{rem}[thm]{Remark}
  \theoremstyle{plain}
  \newtheorem{prop}[thm]{Proposition}
  \theoremstyle{plain}
  \newtheorem{lem}[thm]{Lemma}
  \theoremstyle{plain}
  \newtheorem{cor}[thm]{Corollary}
 \theoremstyle{definition}
  \theoremstyle{remark}
  \newtheorem*{rem*}{Remark}
  \theoremstyle{definition}
\newtheorem*{question*}{\it{QUESTION}}
\theoremstyle{plain}
\newcommand{\N}{\mathbb{N}}
\newcommand{\R}{{\mathbb{R}}}
\newcommand{\C}{{\mathbb{C}}}
\newcommand{\dd}{{\rm d}}
\newcommand{\ii}{{\rm i}}
\newcommand{\ee}{{\rm e}}
\renewcommand{\H}{{\rm H}}
\newcommand{\Arg}{{\rm Arg}}
\renewcommand{\Re}{\mathop\mathrm{Re}\nolimits}
\renewcommand{\Im}{\mathop\mathrm{Im}\nolimits}
\begin{document}

\title[]{A Herglotz--Nevanlinna function from the optimal discrete $p$-Hardy weight}
%\title[]{}

\author{Franti\v sek \v Stampach}
\address[Franti{\v s}ek {\v S}tampach]{
	Department of Mathematics, Faculty of Nuclear Sciences and Physical Engineering, Czech Technical University in Prague, Trojanova~13, 12000 Praha~2, Czech Republic
	}	
\email{stampfra@cvut.cz}

\author{Jakub Waclawek}
\address[Jakub Waclawek]{
	Department of Mathematics, Faculty of Nuclear Sciences and Physical Engineering, Czech Technical University in Prague, Trojanova~13, 12000 Praha~2, Czech Republic
	}	
\email{waclajak@cvut.cz}

\subjclass[2020]{30E20, 26D15, 26A48}

\keywords{discrete Hardy inequality, optimal weight, Herglotz--Nevanlinna function, absolute monotonicity}

\date{\today}

\begin{abstract}
It was recently proved by Fischer, Keller, and Pogorzelski in [Integr. Equ. Oper. Theory, 95(24), 2023] that the classical discrete $p$-Hardy inequality admits an improvement, and the optimal $p$-Hardy weight $\omega_{p}$ was determined therein.  We prove that $\omega_{p}$ directly corresponds to a Herglotz--Nevanlinna function, establish an integral representation for this function, and consequently confirm a~slight modification of a~conjecture on its absolute monotonicity from the aforementioned article.
\end{abstract}

\maketitle

\section{Introduction and main results}

Herglotz--Nevanlinna functions, also known as Pick functions, are analytic functions in the upper half-plane, mapping the upper half-plane into itself. This well-established class of complex functions has demonstrated its utility across a wide range of mathematical disciplines, such as the spectral theory of Jacobi and Schr{\" o}dinger operators~\cite{tes_00, tes_14}, moment problem and orthogonal polynomials~\cite{akh_21}, theory of operator monotone functions~\cite{sim_19}, canonical systems~\cite{rem_18}, de Branges' functional model spaces~\cite{debra_68}, theory of self-adjoint extensions~\cite{ber-has-sno_20}, and many more.  In this note, we highlight their relevance also in the context of \emph{Hardy inequalities}.

\subsection{The optimal discrete $p$-Hardy inequality}

In a 1921 letter to Hardy, Landau proved the inequality 
\begin{equation}
 \sum_{n=1}^{\infty}\left(\frac{a_1+\dots+a_n}{n}\right)^{p}\leq\left(\frac{p}{p-1}\right)^{p}\sum_{n=1}^{\infty}a_{n}^{p}
 \label{eq:hardy_ineq_class}
\end{equation}
for any $p>1$ and $a_n\geq0$, where the constant $(p/(p-1))^{p}$ is the best possible (Hardy proved the inequality with a larger constant). Many proofs of~\eqref{eq:hardy_ineq_class} are known today. A short one due to Elliot~\cite{ell_26} is given in the classical book~\cite[Theorem~326]{har-lit-pol_52}; see also \cite{lef_20} for an alternative short proof and~\cite{kuf-lec-per_06} for a historical survey. 

Inequality~\eqref{eq:hardy_ineq_class} can be rewritten to the difference form
\begin{equation}
\sum_{n=1}^{\infty}|\phi_{n}-\phi_{n-1}|^{p}\geq\sum_{n=1}^{\infty}\omega(n)|\phi_{n}|^{p},
\label{eq:hardy_ineq}
\end{equation}
with the classical $p$-Hardy weight
\[
\omega(n)=\omega_{p}^{\H}(n):=\left(\frac{p-1}{p}\right)^{p}\frac{1}{n^{p}}
\]
for any compactly supported sequence $\phi\in C_{\mathrm{c}}(\N)$ with $\phi_0:=0$. Despite the optimality of the multiplicative constant, $\omega_{p}^{\H}$ admits an improvement. This interesting fact has been observed only recently by Fischer, Keller, and Pogorzelski, who proved in~\cite{fis-kel-pog_23} that the $p$-Hardy inequality~\eqref{eq:hardy_ineq} holds with the weight $\omega$ given by
\begin{equation}
 \omega_{p}(n):=\left(1-\left(1-\frac{1}{n}\right)^{1/q}\right)^{p-1}-\left(\left(1+\frac{1}{n}\right)^{1/q}-1\right)^{p-1},
\label{eq:def_om_p}
\end{equation}
where $q>1$ is the H{\" o}lder conjugate index to $p$, i.e. $1/p+1/q=1$. Moreover, the weight $\omega_{p}$ has been shown to be \emph{optimal} in a strong sense. In particular, the optimality involves the \emph{criticality} of $\omega_{p}$, which means that,  if~\eqref{eq:hardy_ineq} holds with a weight $\omega(n)\geq\omega_{p}(n)$ for all $n\in\N$ and all $\phi\in C_{\mathrm{c}}(\N)$, then necessarily $\omega=\omega_{p}$. In other words, the $p$-Hardy inequality cannot hold for all $\phi\in C_{\mathrm{c}}(\N)$ with a weight pointwise greater than $\omega_{p}$.
The proof of inequality~\eqref{eq:hardy_ineq} with $\omega=\omega_{p}$ as well as the optimality of $\omega_p$ presented in~\cite{fis-kel-pog_23} relies on methods developed in greater generality for graphs, see dissertation thesis~\cite{fis_diss24}, and~\cite{fis_24}. 

A substantial part of~\cite{fis-kel-pog_23} is actually devoted to demonstrating that the weight~\eqref{eq:def_om_p} improves upon $\omega_{p}^{\H}$, i.e. to the nontrivial inequality
\begin{equation}
    \omega_{p}(n)>\omega_{p}^{\H}(n), \quad \forall n\in\N, 
    \label{eq:om_p>om_p^H}
\end{equation}
which is established for all $p>1$. If, in addition, $p$ is an \textbf{integer}, it is further shown in~\cite{fis-kel-pog_23} the expansion
\begin{equation}
    \omega_{p}(n)=\sum_{k=0}^{\infty}\frac{m_{2k}^{(p)}}{n^{2k+p}}, \quad \forall n\in\N, 
    \label{eq:om_expan_integer}
\end{equation}
where the coefficients $m_{2k}^{(p)}$ are \textbf{strictly positive} for all $k\in\N_{0}$. Since $m_{0}^{(p)}=((p-1)/p)^{p}$, inequality~\eqref{eq:om_p>om_p^H} is an immediate consequence. This is a noteworthy property of the optimal $p$-Hardy weight $\omega_p$, one that does not follow from its optimality. The reason is that any number of terms from the expansion \eqref{eq:om_expan_integer} can be used to produce a $p$-Hardy weight into~\eqref{eq:hardy_ineq}, with the resulting weight becoming tighter as more terms are included. 

However, it remained an open problem whether~\eqref{eq:om_expan_integer} also holds for non-integer $p>1$. We prove that this is indeed the case. In fact, it was conjectured in~\cite{fis-kel-pog_23} that the function $x\mapsto \omega_{p}(1/x)$ is \emph{absolutely monotonic} on $[0,1]$; a property that implies non-negativity of the coefficients in~\eqref{eq:om_expan_integer}. This conjecture is true only if $p\in\N$. Nevertheless, we prove that the conjecture holds for every $p>1$ if the function $x\mapsto x^{-p}\omega_{p}(1/x)$ is considered instead, see Corollary~\ref{cor:abs-mon} below.

Discrete Hardy inequalities have received greater attention in the more tractable particular case $p=2$. The study of optimal Hardy inequalities on graphs was initiated in~\cite{kel-pin-pog_18}, motivated by theory of Hardy inequalities for PDEs~\cite{dev-fra-pin_14}. Simple proofs of the optimal discrete Hardy inequality for $p=2$ can be found in~\cite{kel-pin-pog_18a, kre-sta_22}, see also~\cite[Sec.~4]{kre-lap-sta_22}. Discrete Hardy inequalities of higher order, in particular discrete Rellich inequalities, have been investigated in~\cite{ger-kre-sta_25a, hua-ye_24}. Optimal discrete Hardy inequalities of arbitrary order were discovered in~\cite{sta-wac_prep}. Further recent works contributing to the field of discrete Hardy-type inequalities include~\cite{cia-ron_18, das-etal_25, ger-kre-sta_25b, gup_22, gup_24, kel-nit_23, pra-dur_24}.

The paper is organised as follows. Section~\ref{subsec:main} presents our main results, which are subsequently proven in Section~\ref{sec:proofs}. A direct and elementary proof of the positive coefficient expansion~\eqref{eq:om_expan_integer} for integer values of $p$ is provided in Section~\ref{subsec:2.1}, bypassing the theory of Herglotz--Nevanlinna functions. 
Section~\ref{subsec:2.2} reviews needed results from complex analysis. The main result and its corollary, which confirms the corrected conjecture from~\cite{fis-kel-pog_23}, are proven in Sections~\ref{subsec:2.3} and~\ref{subsec:2.4}, respectively. 
As a supplementary result, Section~\ref{sec:comb-form} derives a~formula for the coefficients in~\eqref{eq:om_expan_integer}.

\subsection{Main result}\label{subsec:main}

Before formulating our main results, we introduce a notation.
Considering $\omega_{p}$, determined by the expression~\eqref{eq:def_om_p}, as a function of complex variable, we define function $f_{p}(z):=-z^{p-1}\omega_{p}(z)$, i.e.
\begin{equation}
 f_{p}(z)=z^{p-1}\left[\left(\left(1+\frac{1}{z}\right)^{1/q}-1\right)^{p-1}-\left(1-\left(1-\frac{1}{z}\right)^{1/q}\right)^{p-1}\right]
\label{eq:def_f_p}
\end{equation}
for $z\in\C\setminus\R$ and $p>1$, where $1/p+1/q=1$.
Here and below, the positive power of a~complex argument assumes its principle branch, i.e. for $\alpha>0$, we always make use of the definition
\begin{equation}
 z^{\alpha}:=|z|^{\alpha}\ee^{\ii\alpha\Arg\, z}
\label{eq:pow_func}
\end{equation}
with $\Arg\, z\in(-\pi,\pi]$. It turns out that $f_{p}$ extends analytically to $\C\setminus[-1,1]$, with a branch cut on $[-1,1]$ (see Lemma~\ref{lem:lem1}).

Further, for $x\in[0,1]$, we define function
\begin{equation}
\rho_{p}(x):=-\frac{1}{\pi}\,x^{p-1}\Im\left(1-\ee^{\ii\pi/q}\left(\frac{1}{x}-1\right)^{1/q}\right)^{p-1}.
\label{eq:rho_p}
\end{equation}
The value of $\rho_{p}$ at $0$ is to be understood as the limit of $\rho_{p}(x)$ for $x\to0_{+}$. We will show that $\rho_{p}(0)=0$ for any $p>1$ (see Lemma~\ref{lem:lem2}). The function $\rho_{p}$ turns out to be the density of a measure whose even moments coincide with the coefficients from the expansion~\eqref{eq:om_expan_integer}; see Corollary~\ref{cor:abs-mon} below.

We denote the upper and the lower half-plane by 
\[
\C_{\pm}:=\{z\in\C \mid \pm\Im z>0\}.
\]
Recall that a function $f:\C_{+}\to\C$ is called \emph{Herglotz--Nevanlinna}, if $f$ is analytic on $\C_{+}$ and $\Im f(z)\geq0$ for all $z\in\C_{+}$. Our main result reads:

\begin{thm}\label{thm:main}
Let $p>1$. The function $f_{p}$ defined by~\eqref{eq:def_f_p} is Herglotz--Nevanlinna. Moreover, the density $\rho_{p}$ defined by~\eqref{eq:rho_p} is strictly positive on $(0,1)$ and the integral representation
\begin{equation}
f_{p}(z)=\int_{-1}^{1}\frac{\rho_{p}(|t|)}{t-z}\dd t
\label{eq:int-repre}
\end{equation}
holds for all $z\in\C\setminus[-1,1]$.
\end{thm}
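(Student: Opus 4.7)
The plan is to realise $f_p$ as a Cauchy transform of the (a~priori signed) measure $\rho_p(|t|)\,\dd t$ on $[-1,1]$ via a direct jump-formula calculation. First, a Taylor expansion of $(1\pm 1/z)^{1/q}$ about $z=\infty$ shows that the leading $z^{-(p-1)}$ contributions of the two brackets in~\eqref{eq:def_f_p} cancel, leaving $f_p(z)=-q^{-p}/z+\bigO(1/z^{2})$ as $|z|\to\infty$, which is integrable at infinity and will kill the large-contour contribution below. Next I compute boundary values: for $x\in(0,1)$ and $z=x+\ii\epsilon$ with $\epsilon\to 0_{+}$, the quantity $1-1/z$ approaches the negative real axis from \emph{above}, hence by the branch convention~\eqref{eq:pow_func} one has $(1-1/z)^{1/q}\to\ee^{\ii\pi/q}(1/x-1)^{1/q}$, while $(1+1/z)^{1/q}\to(1+1/x)^{1/q}$ remains real. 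Combining with~\eqref{eq:def_f_p} and~\eqref{eq:rho_p} yields $\Im f_p(x+\ii 0)=\pi\rho_p(x)$. Since $f_p$ is real on $(1,\infty)$ by inspection, Schwarz reflection supplies $f_p(\bar z)=\overline{f_p(z)}$ on $\C\setminus[-1,1]$, whence
\[
f_p(x+\ii 0)-f_p(x-\ii 0)=2\pi\ii\,\rho_p(x),\qquad x\in(0,1).
\]

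To transport the jump to $(-1,0)$ I would establish the symmetry $f_p(-z)=-f_p(z)$ on $\C\setminus[-1,1]$. By the identity theorem it suffices to verify this on the positive imaginary axis, where $1\pm 1/(\ii y)$ are complex conjugates lying safely inside the principal sector, so a short manipulation identifies $f_p(\ii y)$ as purely imaginary, and Schwarz then gives $f_p(-\ii y)=\overline{f_p(\ii y)}=-f_p(\ii y)$. This symmetry immediately propagates the jump, so $f_p(t+\ii 0)-f_p(t-\ii 0)=2\pi\ii\,\rho_p(|t|)$ for \emph{all} $t\in(-1,1)\setminus\{0\}$. Cauchy's theorem applied on $\C\setminus[-1,1]$ to a contour comprising a large circle $|w|=R$ and a shrinking tubular neighbourhood of $[-1,1]$, together with the decay above, then yields
\[
f_p(z)=\frac{1}{2\pi\ii}\int_{-1}^{1}\frac{f_p(t+\ii 0)-f_p(t-\ii 0)}{t-z}\dd t=\int_{-1}^{1}\frac{\rho_p(|t|)}{t-z}\dd t;
\]
integrability of $\rho_p(|t|)$ on $[-1,1]$ follows from continuity on $(0,1)$ together with the endpoint behaviour $\rho_p(0)=0$ (Lemma~\ref{lem:lem2}) and $\rho_p(x)=\bigO((1-x)^{1/q})$ as $x\to 1_{-}$.

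Finally, strict positivity of $\rho_p$ on $(0,1)$ is a geometric observation. Setting $w:=1-\ee^{\ii\pi/q}(1/x-1)^{1/q}$, the point $w$ lies on the ray issued from $1$ in the direction $-\ee^{\ii\pi/q}$, which makes angle $-\pi/p$ with the positive real axis. Since this ray is parallel to the line through the origin at angle $-\pi/p$ and $1$ lies strictly above that line, we obtain $\Arg w\in(-\pi/p,0)$ for $x\in(0,1)$, hence $(p-1)\Arg w\in(-\pi/q,0)\subset(-\pi,0)$, so $\Im w^{p-1}=|w|^{p-1}\sin((p-1)\Arg w)<0$ and $\rho_p(x)>0$. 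The Herglotz--Nevanlinna property is then immediate from the integral representation, since $\Im 1/(t-z)=\Im z/|t-z|^{2}>0$ for $z\in\C_{+}$ and $t\in\R$. The main difficulty is the careful branch bookkeeping in the boundary-value calculation and, in particular, the symmetry $f_p(-z)=-f_p(z)$: naive substitution of $-z$ into~\eqref{eq:def_f_p} picks up a spurious factor $\ee^{2\pi\ii(p-1)}$ and so does \emph{not} reproduce the analytic continuation guaranteed by Lemma~\ref{lem:lem1} outside the principal sheet.
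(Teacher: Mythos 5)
Your proposal is correct, but it reaches the theorem by a genuinely different route than the paper. The paper argues in the opposite logical order: it first proves that $f_p$ is Herglotz--Nevanlinna by a soft argument --- $f_p$ is analytic and bounded on $\C_+$ with a continuous extension to $\R$ (Lemmas~\ref{lem:lem1} and~\ref{lem:lem2}), $\Im f_p\geq0$ on the boundary by Lemmas~\ref{lem:lem2} and~\ref{lem:lem3}, and the Phragm\'en--Lindel\"of-type Theorem~\ref{thm:p-l} then propagates nonnegativity of the imaginary part into $\C_+$ --- and only afterwards extracts the integral representation from the canonical Herglotz machinery: Theorem~\ref{thm:h-n}(i) gives a representing measure, Stieltjes inversion (part~(ii)) identifies it as the absolutely continuous measure $\rho_p(|t|)\,\dd t$ supported in $[-1,1]$, and the asymptotics~\eqref{eq:f_p_z_large} together with part~(iii) force $b_p=0$ and cancel the additive constant. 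You instead derive~\eqref{eq:int-repre} directly, via the Plemelj jump formula and a contour deformation (large circle killed by the $\mathcal{O}(1/z)$ decay, tube collapsing onto the cut), and then obtain the Herglotz property as a one-line corollary of $\rho_p\geq 0$ since $\Im (t-z)^{-1}=\Im z/|t-z|^2$. Your route is self-contained modulo standard contour estimates and dispenses with Theorems~\ref{thm:p-l} and~\ref{thm:h-n}; the paper's route avoids the tube-collapse bookkeeping entirely. Your positivity argument also differs from, and is arguably cleaner than, the paper's: Lemma~\ref{lem:lem3} verifies~\eqref{eq:sin_ineq} by a two-case arctangent analysis ($p\geq2$ versus $p\in(1,2)$), whereas your single geometric observation --- the point $w=1-\ee^{\ii\pi/q}(1/x-1)^{1/q}$ lies on the open ray from $1$ in the direction $\ee^{-\ii\pi/p}$, which stays strictly on the side of the line $\R\,\ee^{-\ii\pi/p}$ containing $1$, so $\Arg w\in(-\pi/p,0)$ and $(p-1)\Arg w\in(-\pi/q,0)\subset(-\pi,0)$ --- treats all $p>1$ uniformly. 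Note also that your leading coefficient $-q^{-p}$ agrees with the paper's $-((p-1)/p)^p$ in~\eqref{eq:f_p_z_large}, since $1/q=(p-1)/p$.

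Two minor points. First, your contour argument needs boundedness of $f_p$ near the interior point $t=0$ of the cut, where the individual brackets in~\eqref{eq:def_f_p} blow up like $|z|^{-(p-1)/q}$; only the product with $z^{p-1}$ is $\mathcal{O}(|z|^{(p-1)/p})$, which is what makes the shrinking-tube contribution near $0$ (and, with continuity at $\pm1$, near the endpoints) vanish. This is precisely the computation in the paper's Lemma~\ref{lem:lem2}, and you should make it explicit rather than only citing $\rho_p(0)=0$. Second, your closing warning that naive substitution of $-z$ into~\eqref{eq:def_f_p} ``picks up a spurious factor $\ee^{2\pi\ii(p-1)}$'' is unfounded: the paper's Lemma~\ref{lem:lem1} performs exactly this substitution, using $(-z)^{p-1}=\ee^{-\ii(p-1)\pi}z^{p-1}$ for $z\in\C_+$, compensated by factors $\ee^{\ii(p-1)\pi}$ from the inner brackets, which lie in $\C_-$ by~\eqref{eq:aux_rel}; the branch factors cancel exactly. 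Your imaginary-axis verification of oddness (purely imaginary values of $f_p(\ii y)$ plus Schwarz reflection and the identity theorem) is a valid alternative, but the caveat attached to it misrepresents the direct computation.
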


 Recall that a real-valued function $g$ is called \emph{absolutely monotonic} in $[0,1]$, if it is continuous on $[0,1]$, infinitely-differentiable on $(0,1)$, and $g^{(n)}(x)\geq0$ for all $x\in(0,1)$ and all $n\in\N_{0}$. 
 For a comprehensive introduction to absolutely monotonic functions, we refer the reader to~\cite[Chap.~IV]{wid_41}. 
 The integral representation~\eqref{eq:int-repre} implies a property of $\omega_p$ that proves a~corrected form of a conjecture from~\cite{fis-kel-pog_23}:
 
\begin{cor}\label{cor:abs-mon}
For any $p>1$, the function 
\begin{equation}
x\mapsto\frac{1}{x^{p}}\,\omega_{p}\!\left(\frac{1}{x}\right)
\label{eq:func}
\end{equation}
is absolutely monotonic on $[0,1]$. Moreover, for all $x\in(0,1]$, we have the convergent expansion
\begin{equation}
    \frac{1}{x^{p}}\omega_{p}\left(\frac{1}{x}\right)=\sum_{k=0}^{\infty}m_{2k}^{(p)}x^{2k}
\label{eq:om_expan}
\end{equation}
with coefficients
\begin{equation}
m_{2k}^{(p)}:=\int_{-1}^{1}t^{2k}\rho_{p}(|t|)\dd t>0
\label{eq:def_mom}
\end{equation}
for all $k\in\N_{0}$.
\end{cor}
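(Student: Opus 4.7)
The plan is to unfold the integral representation from Theorem~\ref{thm:main} as a geometric series in $t/z$, extract the coefficients $m_{2k}^{(p)}$ as even moments of $\rho_p(|t|)$, and then translate the resulting expansion to the variable $x = 1/z$. Concretely, I would start from $-z^{p-1}\omega_p(z) = f_p(z) = \int_{-1}^{1} \rho_p(|t|)/(t-z)\,\dd t$, valid on $\C \setminus [-1,1]$. For real $z > 1$ the bound $|t/z| \leq 1/z < 1$ holds uniformly on $t \in [-1,1]$, so the geometric expansion $1/(t-z) = -\sum_{n \geq 0} t^n/z^{n+1}$ converges uniformly in $t$ and can be integrated term by term, giving $f_p(z) = -\sum_{n \geq 0} \mu_n/z^{n+1}$ with $\mu_n := \int_{-1}^{1} t^n \rho_p(|t|)\,\dd t$. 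Since $\rho_p(|t|)$ is even in $t$, the odd moments vanish; setting $m_{2k}^{(p)} := \mu_{2k}$, dividing by $-z^{p-1}$, and substituting $z = 1/x$ yields \eqref{eq:om_expan} for $x \in (0,1)$. The strict positivity $m_{2k}^{(p)} > 0$ is then immediate from the strict positivity of $\rho_p$ on $(0,1)$ guaranteed by Theorem~\ref{thm:main}.

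To extend the expansion to the endpoint $x = 1$ and obtain continuity on $[0,1]$, I would use that $g(x) := x^{-p}\omega_p(1/x)$ is continuous on $(0,1]$ with finite value $g(1) = \omega_p(1) = 1 - (2^{1/q}-1)^{p-1}$. Since every coefficient is non-negative, each partial sum satisfies $\sum_{k=0}^{N} m_{2k}^{(p)} x^{2k} \leq g(x)$ for $x \in (0,1)$; letting $x \to 1^-$ gives $\sum_{k=0}^{N} m_{2k}^{(p)} \leq g(1)$ uniformly in $N$, and hence $\sum_{k\geq 0} m_{2k}^{(p)} < \infty$. The Weierstrass $M$-test then ensures uniform convergence of $\sum m_{2k}^{(p)} x^{2k}$ on $[0,1]$, so its sum is continuous there and provides the natural continuous extension of $g$ to $x = 0$ with value $m_0^{(p)}$.

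Finally, absolute monotonicity is automatic once the Taylor expansion at the origin has non-negative coefficients: term-by-term differentiation inside the radius of convergence gives $g^{(n)}(x) = \sum_{2k \geq n} m_{2k}^{(p)}\,(2k)!/(2k-n)!\, x^{2k-n}$, which is manifestly non-negative for every $x \in (0,1)$ and every $n \in \N_{0}$, while continuity on the closed interval has just been established. The only step requiring any thought is the boundary behaviour at $x = 1$; the remaining arguments are direct manipulations of the integral representation from Theorem~\ref{thm:main}.
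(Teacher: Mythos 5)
Your proposal is correct, and its skeleton---reading the coefficients $m_{2k}^{(p)}$ off as even moments of $\rho_{p}(|t|)$ via the integral representation \eqref{eq:int-repre} of Theorem~\ref{thm:main}, with positivity inherited from the strict positivity of $\rho_{p}$ on $(0,1)$---is the same as the paper's. You diverge in two sub-steps, both legitimately. At the endpoint $x=1$ the paper invokes Littlewood's Tauberian theorem, verifying the Tauberian hypothesis $m_{2k}^{(p)}=\mathcal{O}(1/k)$ from the boundedness of $\rho_{p}$ on $[0,1]$; you instead exploit the nonnegativity of the coefficients: the partial sums are dominated by $g(x):=x^{-p}\omega_{p}(1/x)$ on $(0,1)$, letting $x\to1^{-}$ bounds them by the finite value $g(1)$, and the Weierstrass $M$-test then gives uniform convergence on $[0,1]$, hence continuity of the sum and equality with $g$ at $x=1$. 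This is more elementary---for nonnegative coefficients Abel summability implies convergence outright, so no Tauberian machinery is needed---though the paper's route records the decay rate of the moments as a by-product. For absolute monotonicity the paper differentiates under the integral sign in $\int_{-1}^{1}\rho_{p}(|t|)(1-tx)^{-1}\,\dd t$ and pairs $t$ with $-t$ to exhibit $g^{(n)}(x)$ as a manifestly positive integral (which is also how the strict positivity in the paper's Remark is obtained); you instead differentiate the power series term by term inside its interval of convergence, which is equally valid since all $m_{2k}^{(p)}\geq0$ (indeed $>0$, so your formula yields strict positivity of the derivatives as well). The only structural difference is that your absolute monotonicity argument depends on first establishing the expansion \eqref{eq:om_expan}, whereas the paper's integral formula for the derivatives is self-contained; both constitute complete proofs, and your handling of the value at $x=0$ as the continuous extension with value $m_{0}^{(p)}$ matches the convention required by the definition of absolute monotonicity on $[0,1]$.
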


\begin{rem}
In fact, the derivatives of~\eqref{eq:func} of all orders are strictly positive in $(0,1)$.
\end{rem}

As an immediate consequence of Corollary~\ref{cor:abs-mon}, we get the expansion~\eqref{eq:om_expan_integer} for all~$p>1$, with $m_{2k}^{(p)}>0$ for all $k\in\N_{0}$. In Proposition~\ref{prop:comb_formula} below, we provide an explicit but combinatorially complicated formula for the coefficients $m_{2k}^{(p)}$ for all $k\in\N_{0}$ and $p>1$. For $p$ integer, the formula simplifies; see Remark~\ref{rem:mom_p_int} below.

\section{Proofs}\label{sec:proofs}

\subsection{A warm up: the case of integer $p$}\label{subsec:2.1}

In the context of the discrete $p$-Hardy inequalities, key properties of the function $f_{p}$ are the integral formula~\eqref{eq:int-repre}, positivity of $\rho_{p}$ on $(0,1)$, and finite (actually vanishing) side limits $\rho_{p}(0)=\rho_{p}(1)=0$ since they directly lead to the expansion~\eqref{eq:om_expan_integer} with positive coefficients. 
Notably, when $p>1$ is an integer, these properties can be established using elementary methods, bypassing the need for Herglotz--Nevanlinna theory. As an exercise and an alternative demonstration of the results from~\cite{fis-kel-pog_23}, we offer a concise direct proof. Readers primarily interested in the proof of Theorem~\ref{thm:main} for general $p>1$ may safely skip this subsection.

Assuming $p\in\N$, $p>1$, we first check the positivity of $\rho_{p}$ on $(0,1)$. Using the binomial formula, we may rewrite the definition \eqref{eq:rho_p} as
\begin{equation} \label{eq:rho_p-alt}
    \rho_p(x)=-\frac{1}{\pi} \sum_{j=0}^{p-1} \binom{p-1}{j} (-1)^j \sin{\left(\frac{j\pi}{q}\right)} (1-x)^{j/q}x^{p-1-j/q}.
\end{equation}
From here, it is immediate that $\rho_{p}(x)>0$ for all $x\in(0,1)$ since $(-1)^{j}\sin(j\pi/q)<0$ for all $1\leq j\leq p-1$. Moreover, the side limit values $\rho_{p}(0)=\rho_{p}(1)=0$ also follow.

While it is obvious that the integral in~\eqref{eq:int-repre} is analytic in $\C\setminus[-1,1]$, one can verify the same for $f_{p}$ by inspection of the defining expression~\eqref{eq:def_f_p}; this is shown in Lemma \ref{lem:lem1} below in detail. Therefore it suffices to verify~\eqref{eq:int-repre} for $z\in\C$ in a neighborhood of $\infty$. Equation~\eqref{eq:int-repre} rewrites equivalently as
\begin{equation} \label{eq:int-repre-integer}
    -\frac{1}{z}f_{p}\left(\frac{1}{z}\right) = \int_{-1}^{1}\frac{\rho_{p}(|t|)}{1-tz}\dd t = 2\int_{0}^{1}\frac{\rho_{p}(t)}{1-t^2z^2}\dd t.
\end{equation}
We prove~\eqref{eq:int-repre-integer} by expanding both sides into power series in $z$, assuming $|z|<1$, and showing coincidence of the corresponding coefficients.

\emph{a) Expansion of the left-hand side of~\eqref{eq:int-repre-integer}:} By applying the (generalized) binomial theorem in~\eqref{eq:def_f_p} twice, we find
\begin{align*}
    -\frac{1}{z}f_p\left(\frac{1}{z}\right) &= \frac{1}{z^p} \sum_{j=0}^{p-1} \binom{p-1}{j} (-1)^j \left( (1-z)^{j/q} - (-1)^{p-1} (1+z)^{j/q} \right) \\
        &= \frac{1}{z^p} \sum_{j=0}^{p-1} \binom{p-1}{j} (-1)^j \sum_{k=0}^\infty \binom{j/q}{k} \left( (-z)^k - (-1)^{p-1}z^k \right)
\end{align*}
for $z\in\C$ with $|z|<1$. Thus, we arrive at the formula
\begin{equation} \label{eq:LHS-exp-cases}
    -\frac{1}{z}f_p\left(\frac{1}{z}\right) =
    \begin{cases}
        2\displaystyle \sum\limits_{n=0}^\infty z^{2n-p} \sum\limits_{j=0}^{p-1} (-1)^j \binom{p-1}{j}\binom{j/q}{2n} & \mbox{ for } p\in 2\N, \\[4pt]
        2\displaystyle \sum\limits_{n=0}^\infty z^{2n+1-p} \sum\limits_{j=0}^{p-1} (-1)^{j+1} \binom{p-1}{j}\binom{j/q}{2n+1} & \mbox{ for } p\in 2\N-1.
    \end{cases}
\end{equation}
Notice that for any $\ell\in\N$, $\alpha>0$, and $m\in\{0,1,\dots,\ell-1\}$, we have
\[
        \sum_{j=0}^\ell \binom{\ell}{j}\binom{\alpha j}{m} (-1)^j = 0,
\]
which is a consequence of the identity~\cite[Eq.~0.154.3]{grad-ryz_07}
   \[
        \sum_{j=0}^\ell \binom{\ell}{j} j^m (-1)^j = 0, \quad\forall m\in\{0,1,\dots,\ell-1\}.
   \]
As a result, the terms in~\eqref{eq:LHS-exp-cases} corresponding to $n=0,1,\dots,\lfloor(p-2)/2\rfloor$ vanish. Using this observation and shifting the index $n$, we obtain the expansion
\begin{equation} \label{eq:LHS-expansion}
-\frac{1}{z}f_p\left(\frac{1}{z}\right) = 2 \sum\limits_{n=0}^\infty z^{2n} \sum\limits_{j=0}^{p-1} (-1)^{j+p} \binom{p-1}{j}\binom{j/q}{2n+p}, \quad |z|<1.    
\end{equation}

\emph{b) Expansion of the right-hand side of~\eqref{eq:int-repre-integer}:}  For $z\in\C$ with $|z|<1$, we have 
\[
    \int_{0}^{1}\frac{\rho_{p}(t)}{1-t^2z^2}\dd t = \sum_{n=0}^\infty z^{2n} \int_0^1 t^{2n}\rho_p(t) \dd t.
\]
Bearing \eqref{eq:rho_p-alt} in mind, we further get
\[
    \int_{0}^{1}\frac{\rho_{p}(t)}{1-t^2z^2}\dd t = \sum_{n=0}^\infty z^{2n} \left(\frac{1}{\pi} \sum_{j=0}^{p-1} \binom{p-1}{j} (-1)^{j+1} \sin{\left(\frac{j\pi}{q}\right)} \int_0^1 (1-t)^{j/q}t^{p-1-j/q+2n} \dd t \ \right)\!.
\]
The latter integral can be computed using the Euler beta and gamma functions as
\[
    \int_0^1 (1-t)^{j/q}t^{p-1-j/q+2n} \dd t = \mathrm{B} \left(2n+p-\frac{j}{q},1+\frac{j}{q}\right) = \frac{\Gamma\left(2n+p-\frac{j}{q}\right)\Gamma\left(1+\frac{j}{q}\right)}{\Gamma\left(2n+p+1\right)},
\]
which further simplifies to
\[
    \int_0^1 (1-t)^{j/q}t^{p-1-j/q+2n} \dd t=(-1)^{p+1}\binom{j/q}{2n+p} \frac{\pi}{\sin(j\pi/q)}
\]
by employing the identity $\Gamma(x+1)=x\Gamma(x)$ and Euler's reflection formula. Altogether, we deduce for the right-hand side of~\eqref{eq:int-repre-integer} the same expansion as in~\eqref{eq:LHS-expansion}. The proof of~\eqref{eq:int-repre} for integer $p$ follows.

\begin{rem}
The crucial simplifications inherent in the preceding method are not applicable for non-integer values of $p>1$; cf. Proposition~\ref{prop:comb_formula} below. This discrepancy suggests that establishing~\eqref{eq:int-repre} for non-integer $p>1$ requires a different strategy.
\end{rem}

\subsection{Two theorems from complex analysis}\label{subsec:2.2}

In our proof of Theorem~\ref{thm:main}, we will make use of two facts from complex analysis. 
The first one is the following property of bounded analytic functions on $\C_{+}$ which follows from the 
Phragmén--Lindel{\" o}f principal, see for example~\cite[Sec.~III.C]{koo_98}.

\begin{thm}\label{thm:p-l}
Let $f$ be a bounded analytic function on $\C_{+}$ continuous up to the real line~$\R$. If $\Im f(x)\geq0$ for all $x\in\R$, then $\Im f(z)\geq0$ for all $z\in\C_{+}$.
\end{thm}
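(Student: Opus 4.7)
The plan is to reduce the statement to the standard Phragm\'en--Lindel\"of maximum principle for bounded analytic functions on the half-plane via an exponentiation trick. Set $h(z) := \exp(\ii f(z))$; this function is analytic on $\C_{+}$ and continuous up to $\R$. Using $|h(z)| = \exp(-\Im f(z))$, the boundedness of $f$ (say $|f|\leq M$) yields $|h| \leq \ee^{M}$ on $\C_{+}$, while the hypothesis $\Im f(x) \geq 0$ on $\R$ translates to $|h(x)| \leq 1$ on $\R$. Since the desired conclusion $\Im f(z) \geq 0$ on $\C_{+}$ is equivalent to $|h(z)| \leq 1$ on $\C_{+}$, it suffices to establish a half-plane maximum-modulus statement for the bounded analytic function $h$.

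Next, I would carry out the classical Phragm\'en--Lindel\"of barrier argument. For $\varepsilon > 0$, introduce $b_{\varepsilon}(z) := \exp\!\bigl(-\varepsilon\sqrt{-\ii z}\bigr)$, using the principal branch of the square root. A quick check of the argument shows $\Re\sqrt{-\ii z} \geq |z|^{1/2}/\sqrt{2}$ throughout $\overline{\C_{+}}$, so $|b_{\varepsilon}(z)| \leq 1$ on $\R$ and $|b_{\varepsilon}(z)| \to 0$ as $|z|\to\infty$ in $\overline{\C_{+}}$. On a half-disc $D_{R} := \{ z \in \C_{+} : |z| < R \}$, the product $hb_{\varepsilon}$ therefore satisfies $|hb_{\varepsilon}| \leq 1$ on the real diameter and $|hb_{\varepsilon}| \leq \ee^{M} \exp(-\varepsilon\sqrt{R}/\sqrt{2}) \leq 1$ on the circular arc once $R$ is large enough. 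The ordinary maximum-modulus principle on $D_{R}$ yields $|h(z)b_{\varepsilon}(z)|\leq 1$ for $z \in D_{R}$; fixing $z$ and letting $R \to \infty$ extends this to $\C_{+}$, and letting $\varepsilon \to 0_{+}$ (using $b_{\varepsilon}(z) \to 1$ pointwise) gives $|h(z)| \leq 1$, which is the claim.

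The principal obstacle is the unboundedness of $\C_{+}$: the interior maximum principle does not apply directly, and the conclusion genuinely fails without the boundedness hypothesis on $f$, as witnessed by $f(z) = -z$, which satisfies $\Im f \equiv 0$ on $\R$ but $\Im f(z) = -\Im z < 0$ on $\C_{+}$. Any proof must therefore introduce a mechanism for controlling behaviour at infinity; the exponential barrier $b_{\varepsilon}$ converts the mere boundedness of $h$ into genuine decay along the semicircular arc, and is precisely the device that turns the bounded-half-plane problem into a compact-domain application of the maximum modulus principle. Since the result is a textbook consequence of Phragm\'en--Lindel\"of and is quoted from~\cite{koo_98}, no further elaboration is needed in the paper itself.
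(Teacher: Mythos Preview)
Your proof is correct and follows exactly the Phragm\'en--Lindel\"of route the paper indicates: the paper does not supply its own proof of this theorem but merely cites~\cite{koo_98}, and your exponentiation-plus-barrier argument is precisely the standard derivation found there. The only addition worth noting is that your write-up makes explicit the reduction $h=\exp(\ii f)$ and the barrier $b_{\varepsilon}(z)=\exp(-\varepsilon\sqrt{-\ii z})$, details which the paper leaves to the reference.
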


%\begin{defn}\label{def:h-n}
%A function $f:\C\setminus\R\to\C$ is called \emph{Herglotz--Nevanlinna} if and only if it satisfies:
%\begin{enumerate}[i)]
%\item $f$ is analytic in $\C_{+}$,
%\item $\overline{f(z)}=f(\overline{z})$ for all $z\in\C\setminus\R$,
%\item $\Im f(z)\geq0$ for all $z\in\C_{+}$.
%\end{enumerate}
%\end{defn}

The second theorem compiles selected properties of Herglotz--Nevanlinna functions, notably their integral representation. These results are standard; proofs can be found for instance in~\cite[\S3.4]{tes_14}. Recall that the domain of any Herglotz--Nevanlinna function $f$ can be extended from the upper half-plane $\C_{+}$ to the cut-plane $\C\setminus\R$ by the Schwarz reflection $f(\overline{z})=\overline{f(z)}$. This extension is assumed hereafter whenever a Herglotz-Nevanlinna function is considered on $\C\setminus\R$.

\begin{thm}\label{thm:h-n}
Let $f:\C\setminus\R\to\C$ be a Herglotz--Nevanlinna function. Then the following claims hold true:
\begin{enumerate}[i)]
\item There exist unique constants $a\in\R$, $b\geq0$, and a unique Borel measure $\mu$ on $\R$ satisfying 
\[
 \int_{\R}\frac{\dd\mu(t)}{1+t^{2}}<\infty
\]
such that
\[
 f(z)=a+bz+\int_{\R}\left(\frac{1}{t-z}-\frac{t}{1+t^{2}}\right)\dd\mu(t)
\]
for all $z\in\C\setminus\R$.
\item If the limit of $\Im f(x+\ii\varepsilon)$ for $\varepsilon\to0+$ exists and is finite for all $x\in\R$, then the measure $\mu$ is purely absolutely continuous and its density reads
\[
 \frac{\dd\mu}{\dd x}(x)=\frac{1}{\pi}\lim_{\varepsilon\to0+}\Im f(x+\ii\varepsilon)
\]
for $x\in\R$.
\item If $\mu$ is a compactly supported finite measure, then 
\[
 f(z)=bz+a-\int_{\R}\frac{t\dd\mu(t)}{1+t^{2}}-\frac{\mu(\R)}{z}+\mathcal{O}\left(\frac{1}{z^{2}}\right) \quad\mbox{ as } z\to\infty.
\]
\end{enumerate}
\end{thm}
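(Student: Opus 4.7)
The plan is to derive the Herglotz--Nevanlinna property of $f_p$ from Theorem~\ref{thm:p-l}, and then extract the integral representation from Theorem~\ref{thm:h-n}. Before these can be invoked, I will rely on Lemmas~\ref{lem:lem1} and~\ref{lem:lem2} for the basic analytic properties of $f_p$: analyticity on $\C\setminus[-1,1]$, continuity up to $\R$, boundedness on $\C_+$, reality on $\R\setminus[-1,1]$, decay at infinity, and the boundary identity $\Im f_p(x+\ii 0)=\pi\rho_p(|x|)$ for $x\in(-1,1)$. This last identity is a short branch-cut computation: as $z=x+\ii\varepsilon\to x+\ii 0$ with $x\in(0,1)$, the quantity $1-1/z$ approaches the negative real axis from above, so $(1-1/z)^{1/q}$ acquires the phase factor $\ee^{\ii\pi/q}$, while the other factors $z^{p-1}$ and $((1+1/z)^{1/q}-1)^{p-1}$ remain real; the imaginary part then matches the defining expression~\eqref{eq:rho_p}.

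The main obstacle is to establish strict positivity of $\rho_p$ on $(0,1)$, because this feeds the sign hypothesis of Theorem~\ref{thm:p-l} and secures positivity of the density in the representation. Setting $r:=(1/x-1)^{1/q}\in(0,\infty)$ and $h(r):=1-r\ee^{\ii\pi/q}$, the task reduces to proving $\Im h(r)^{p-1}<0$. The naive estimate $\Arg h(r)\in(-\pi,0)$ (from $\Im h<0$) is insufficient when $p>2$, since then $(p-1)\Arg h(r)$ can in principle fall below $-\pi$. The sharper bound uses the identity $\ee^{\ii\pi/q}=-\ee^{-\ii\pi/p}$, a direct consequence of $1/p+1/q=1$, which rewrites $h(r)=1+r\ee^{-\ii\pi/p}$ and gives
\begin{equation*}
h(r)\,\ee^{\ii\pi/p}=r+\ee^{\ii\pi/p}.
\end{equation*}
The right-hand side has strictly positive imaginary part $\sin(\pi/p)>0$, which pins down $\Arg h(r)+\pi/p\in(0,\pi)$. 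Combined with $\Im h(r)<0$, this forces $\Arg h(r)\in(-\pi/p,0)$, hence $(p-1)\Arg h(r)\in(-\pi/q,0)\subset(-\pi,0)$, so that $\Im h(r)^{p-1}=|h(r)|^{p-1}\sin((p-1)\Arg h(r))<0$, and therefore $\rho_p(x)>0$.

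With positivity in hand, $\Im f_p\geq 0$ holds on all of $\R$ (identically zero outside $[-1,1]$). By boundedness of $f_p$ on $\C_+$, Theorem~\ref{thm:p-l} lifts this to $\Im f_p\geq 0$ on $\C_+$, so $f_p$ is Herglotz--Nevanlinna. Theorem~\ref{thm:h-n}(i)--(ii), together with the explicit boundary limit, then yields the representation
\begin{equation*}
f_p(z)=a+bz+\int_{-1}^{1}\left(\frac{1}{t-z}-\frac{t}{1+t^{2}}\right)\rho_{p}(|t|)\dd t,
\end{equation*}
and the constant $\int_{-1}^{1}t\rho_p(|t|)/(1+t^{2})\dd t$ vanishes by evenness of $\rho_p(|\cdot|)$. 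A direct Taylor expansion at infinity via $(1\pm 1/z)^{1/q}=1\pm 1/(qz)+\bigO(1/z^{2})$ shows $f_p(z)=\bigO(1/z)$ as $z\to\infty$; comparing with Theorem~\ref{thm:h-n}(iii) forces $a=b=0$, yielding~\eqref{eq:int-repre}.
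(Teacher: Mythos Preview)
Your proposal does not address Theorem~\ref{thm:h-n}. The statement you were given is the standard Herglotz--Nevanlinna representation theorem (claims (i)--(iii)), which the paper does not prove but simply cites from~\cite{tes_14}. What you have written is instead a proof sketch of Theorem~\ref{thm:main}---the assertion that the specific function $f_p$ is Herglotz--Nevanlinna and satisfies~\eqref{eq:int-repre}---and in that sketch you \emph{invoke} Theorem~\ref{thm:h-n} as a black box. So there is a basic mismatch between the target statement and your argument; nothing in your write-up establishes the general integral representation, the Stieltjes inversion formula, or the asymptotic expansion for an arbitrary Herglotz--Nevanlinna function.

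If your intended target was actually Theorem~\ref{thm:main}, then your outline follows the paper's proof in Section~\ref{subsec:2.3} closely, with one noteworthy difference in the positivity step. The paper's Lemma~\ref{lem:lem3} splits into the cases $p\in(1,2)$ and $p\ge 2$ and tracks the sign of $\Re(1-\ee^{\ii\pi/q}t)$ separately in each. You avoid this case distinction by rewriting $h(r)=1-r\ee^{\ii\pi/q}=1+r\ee^{-\ii\pi/p}$ and noting that $\ee^{\ii\pi/p}h(r)=r+\ee^{\ii\pi/p}$ lies in the open upper half-plane; combined with $\Im h(r)<0$ this forces $\Arg h(r)\in(-\pi/p,0)$ uniformly in $p>1$, hence $(p-1)\Arg h(r)\in(-\pi/q,0)$. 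That is a genuine simplification over the paper's argument. A minor difference at the end: you kill the constant $\int_{-1}^{1}t\rho_p(|t|)/(1+t^2)\,\dd t$ by the evenness of $\rho_p(|\cdot|)$, whereas the paper absorbs it into $a_p$ and identifies $a_p$ from the large-$z$ asymptotics; both routes are fine.
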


\subsection{Proof of Theorem~\ref{thm:main}}\label{subsec:2.3}

First, we deduce preliminary properties of the function $f_{p}$ defined by~\eqref{eq:def_f_p}. We assume $p>1$ everywhere.

\begin{lem}\label{lem:lem1}
 Function $f_{p}$ extends analytically to the cut-plane $\C\setminus[-1,1]$, satisfies the symmetry relations
\begin{equation}
 \overline{f_{p}(z)}=f_{p}(\overline{z}) 
 \quad\mbox{ and }\quad 
 f_{p}(-z)=-f_{p}(z)
\label{eq:f_p_symm}
\end{equation}
for all $z\in\C\setminus\R$, 
and the asymptotic expansion 
\begin{equation}
f_{p}(z)=-\left(\frac{p-1}{p}\right)^{p}\frac{1}{z}+\mathcal{O}\left(\frac{1}{z^{2}}\right) \quad\mbox{ as }z\to\infty.
\label{eq:f_p_z_large}
\end{equation}
\end{lem}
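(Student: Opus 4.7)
The plan is to establish the three claims sequentially: first that \eqref{eq:def_f_p} directly defines an analytic function on the slit plane $\C\setminus(-\infty,1]$, then the two symmetries, using the odd symmetry to widen the domain to $\C\setminus[-1,1]$, and finally the asymptotic expansion via a binomial calculation.

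For the initial analyticity I inspect each of the five elementary factors in \eqref{eq:def_f_p} ($z^{p-1}$, $(1\pm 1/z)^{1/q}$, and the two outer $(\,\cdot\,)^{p-1}$); each is analytic provided its base avoids the principal-branch cut $(-\infty,0]$. A short case analysis—solving $z\in(-\infty,0]$, $1\pm 1/z\in(-\infty,0]$, $(1+1/z)^{1/q}\in(-\infty,1]$, and $(1-1/z)^{1/q}\in[1,\infty)$—identifies the exceptional sets as $(-\infty,0]$, $[-1,0]$, $(0,1]$, $(-\infty,-1]$, and $(-\infty,0)$, respectively, whose union equals $(-\infty,1]$. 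The Schwarz-type relation $\overline{f_{p}(z)}=f_{p}(\overline{z})$ then follows at once from $\overline{w^{\alpha}}=\overline{w}^{\alpha}$ (valid for $w\notin(-\infty,0]$ and $\alpha\in\R$), applied termwise to \eqref{eq:def_f_p}.

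For the odd symmetry $f_{p}(-z)=-f_{p}(z)$, substituting $-z$ in \eqref{eq:def_f_p} and using $1\pm 1/(-z)=1\mp 1/z$ gives
\[
f_{p}(-z)=(-z)^{p-1}\bigl[((1-1/z)^{1/q}-1)^{p-1}-(1-(1+1/z)^{1/q})^{p-1}\bigr].
\]
For $z\in\C_{+}$, the inclusions $1+1/z\in\C_{-}$, $1-1/z\in\C_{+}$ together with $\Arg((1+ 1/z)^{1/q})\in(-\pi/q,0)$ and $\Arg((1- 1/z)^{1/q})\in(0,\pi/q)$ show that $A:=(1+1/z)^{1/q}-1$ and $B:=1-(1-1/z)^{1/q}$ both lie in $\C_{-}$, while $-A$ and $-B$ lie in $\C_{+}$. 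Under the principal branch, $(-A)^{p-1}=e^{i(p-1)\pi}A^{p-1}$ for $A\in\C_{-}$, and the same for $B$; combined with $(-z)^{p-1}=e^{-i(p-1)\pi}z^{p-1}$ on $\C_{+}$, all phase factors cancel while the two bracketed terms swap, yielding $f_{p}(-z)=-f_{p}(z)$. The case $z\in\C_{-}$ follows by conjugation. Setting $\tilde f_{p}(z):=-f_{p}(-z)$ now defines an analytic function on $\C\setminus[-1,\infty)$ that coincides with $f_{p}$ on $\C\setminus\R$, so gluing extends $f_{p}$ analytically to $(\C\setminus(-\infty,1])\cup(\C\setminus[-1,\infty))=\C\setminus[-1,1]$.

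Finally, the binomial expansion $(1\pm 1/z)^{1/q}=1\pm\tfrac{1}{qz}+\tfrac{1-q}{2q^{2}z^{2}}+\mathcal{O}(z^{-3})$ gives both $(1+1/z)^{1/q}-1$ and $1-(1-1/z)^{1/q}$ a common leading factor $1/(qz)$, with the next-order corrections equal in magnitude but opposite in sign. Raising to the $(p-1)$th power and subtracting therefore cancels the $(qz)^{-(p-1)}$ term and leaves $-(p-1)(q-1)/(q^{p}z^{p})+\mathcal{O}(z^{-p-1})$. The H\"older relation $p+q=pq$ gives $(p-1)(q-1)=1$ and $q^{-p}=((p-1)/p)^{p}$, so multiplying by $z^{p-1}$ produces~\eqref{eq:f_p_z_large}. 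The main technical difficulty is the phase tracking in the odd symmetry; once the half-plane locations of the intermediate expressions on $\C_{+}$ are settled, the remaining steps are routine.
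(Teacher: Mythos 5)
Your proposal is correct in substance and its symmetry and asymptotic computations coincide with the paper's: the odd symmetry is proved by exactly the same phase tracking (the inclusions $A,B\in\C_{-}$ for $z\in\C_{+}$ are the paper's relation~\eqref{eq:aux_rel}, and the factors $\ee^{\pm\ii(p-1)\pi}$ cancel identically), and the expansion $(1\pm1/z)^{1/q}=1\pm\frac{1}{qz}+\frac{1-q}{2q^{2}z^{2}}+\mathcal{O}(z^{-3})$ with $(p-1)(q-1)=1$, $q^{-p}=((p-1)/p)^{p}$ is the paper's computation in different clothing. Where you genuinely deviate is the analytic continuation: the paper proves analyticity only on $\C\setminus\R$ from~\eqref{eq:def_f_p}, then computes boundary limits~\eqref{eq:f_p_bv_x>1} and extends across $\R\setminus[-1,1]$ by continuity (implicitly a Morera/Schwarz-reflection argument). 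You instead verify by a case analysis of the principal-branch cuts that~\eqref{eq:def_f_p} is already analytic on all of $\C\setminus(-\infty,1]$ --- so no limit argument is needed across $(1,\infty)$ --- and then glue with $\tilde f_{p}(z)=-f_{p}(-z)$, which agrees with $f_{p}$ on the overlap $\C\setminus\R$ by the odd symmetry, to reach $\C\setminus[-1,1]$. This is a clean, self-contained alternative that trades the paper's boundary-value computation (which the paper reuses anyway in Lemma~\ref{lem:lem2}) for an elementary cut bookkeeping plus reflection gluing.

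One step in your asymptotics paragraph is stated too casually: writing $A^{p-1}=\bigl(\tfrac{1+u}{qz}\bigr)^{p-1}=(qz)^{-(p-1)}(1+u)^{p-1}$ uses multiplicativity of the principal power, which fails in general and is most delicate precisely near the negative real axis --- this pitfall is exactly why the paper first establishes the rewriting~\eqref{eq:f_p_alt_expr} via $(zw)^{\alpha}=z^{\alpha}w^{\alpha}$ for $z\in\C_{+}\cup(1,\infty)$, $w\in\C_{-}\cup(1,\infty)$, so that the quantities being expanded, $zA$ and $zB$, tend to the positive number $1/q$ and no further branch issues arise. Your argument is repairable with material you already have: by the two symmetries (and the real leading coefficient) it suffices to take $z\to\infty$ within $\C_{+}\cup(1,\infty)$, and there your observation $A,B\in\C_{-}$ rules out argument wrap-around, legitimizing the factorization --- but as written, the restriction to $\C_{+}\cup(1,\infty)$ and the branch justification are missing, and they should be said explicitly since the expansion~\eqref{eq:f_p_z_large} is claimed for unrestricted $z\to\infty$ in $\C\setminus[-1,1]$.
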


\begin{proof}
First, we verify the symmetry formulas~\eqref{eq:f_p_symm}. 
Since the principal branch of the power function~\eqref{eq:pow_func} enjoys the symmetry $\overline{z^{\alpha}}=\overline{z}^{\,\alpha}$ for all $z\in\C\setminus(-\infty,0]$ the first symmetry formula from~\eqref{eq:f_p_symm} for $f_{p}$ follows readily from its definition~\eqref{eq:def_f_p}. To prove the second symmetry from~\eqref{eq:f_p_symm}, first observe that, for any $\alpha>0$, we have
\[
 (-z)^{\alpha}=\ee^{\mp\ii\alpha\pi}z^{\alpha} \quad\mbox{ if } z\in\C_{\pm}.
\]
It suffices to take $z\in\C_{+}$, for which we have
\begin{equation}
 \pm1\mp\left(1\mp\frac{1}{z}\right)^{1/q}\in\C_{-},
\label{eq:aux_rel}
\end{equation}
here it is important that $q>1$. Now, we find
\begin{align*}
 f_{p}(-z)&=(-z)^{p-1}\left[\left(\left(1-\frac{1}{z}\right)^{1/q}-1\right)^{p-1}-\left(1-\left(1+\frac{1}{z}\right)^{1/q}\right)^{p-1}\right]\\
 &=\ee^{-\ii(p-1)\pi}z^{p-1}
 \left[\ee^{\ii(p-1)\pi}\left(1-\left(1-\frac{1}{z}\right)^{1/q}\right)^{p-1}-\ee^{\ii(p-1)\pi}\left(\left(1+\frac{1}{z}\right)^{1/q}-1\right)^{p-1}\right]\\
 &=-f_{p}(z)
\end{align*}
for any $z\in\C_{+}$, which implies that $f_{p}(-z)=-f_{p}(z)$ for all $z\in\C\setminus\R$.

Second, we prove the analyticity of $f_{p}$ in $\C\setminus[-1,1]$.
Since the function $z\mapsto z^{\alpha}$ defined by~\eqref{eq:pow_func} is analytic in $\C\setminus(-\infty,0]$, the analyticity of $f_{p}$ in $\C\setminus\R$ is immediate from its definition~\eqref{eq:def_f_p}. To show that $f_{p}$ is analytic on $\C\setminus[-1,1]$, it suffices to show that $f_{p}$ extends continuously to $\R\setminus[-1,1]$. For any $x>1$, one computes, by inspection of the expression~\eqref{eq:def_f_p}, that
\begin{equation}
 \lim_{\substack{z\to x \\ z\in\C\setminus\R}} f_{p}(z)=
 x^{p-1}\left[\left(\left(1+\frac{1}{x}\right)^{1/q}-1\right)^{p-1}-\left(1-\left(1-\frac{1}{x}\right)^{1/q}\right)^{p-1}\right].
\label{eq:f_p_bv_x>1}
\end{equation}
The continuity of $f_{p}$ on $(1,\infty)$ follows. The continuity of $f_{p}$ on $(-\infty,-1)$ is a consequence of the already proven formulas~\eqref{eq:f_p_symm}.

Lastly, we derive the asymptotic expansion~\eqref{eq:f_p_z_large}.
Due to symmetries~\eqref{eq:f_p_symm}, we may confine $z$ to $\C_{+}\cup(1,\infty)$. Since $(zw)^{\alpha}=z^{\alpha}w^{\alpha}$ for any $z\in\C_{+}\cup(1,\infty)$ and $w\in\C_{-}\cup(1,\infty)$, we may rewrite~\eqref{eq:def_f_p}, bearing~\eqref{eq:aux_rel} in mind, as
\begin{equation}
f_{p}(z)=\left(z\left(1+\frac{1}{z}\right)^{1/q}-z\right)^{p-1}-\left(z-z\left(1-\frac{1}{z}\right)^{1/q}\right)^{p-1}
\label{eq:f_p_alt_expr}
\end{equation}
for all $z\in\C_{+}\cup(1,\infty)$. Now, using the generalized binomial theorem twice, one readily expands
\begin{align*}
f_{p}(z)&=\frac{1}{q^{p-1}}\left[\left(1+\left(\frac{1}{q}-1\right)\frac{1}{2z}+\mathcal{O}\left(\frac{1}{z^{2}}\right)\right)^{p-1}-\left(1-\left(\frac{1}{q}-1\right)\frac{1}{2z}+\mathcal{O}\left(\frac{1}{z^{2}}\right)\right)^{p-1}\right]\\
&=\frac{1}{q^{p-1}}\left[\left(\frac{1}{q}-1\right)\frac{p-1}{z}+\mathcal{O}\left(\frac{1}{z^{2}}\right)\right]
\end{align*}
for $z\to\infty$. Recalling that $1/p+1/q=1$, the last formula simplifies to~\eqref{eq:f_p_z_large}.
\end{proof}

In the second lemma, we explore the continuation of $f_{p}$ onto the cut $[-1,1]$ from the upper half-plane~$\C_{+}$.

\begin{lem}\label{lem:lem2}
 The function $f_{p}$ extends continuously to the interval $[-1,1]$ from~$\C_{+}$. Moreover, for $x\in\R$, we have
 \begin{equation}
  \lim_{\substack{z\to x \\ z\in\C_{+}}}\Im f_{p}(z)=
  \begin{cases}  
  \pi\rho_{p}(|x|), &\quad\mbox{ if } |x|<1, \\
  0, &\quad\mbox{ if } |x|\geq1,
  \end{cases}
 \label{eq:f_p_bv_x>0}
 \end{equation}
 where $\rho_{p}$ is defined by~\eqref{eq:rho_p}; here the value for $x=0$ is given by the respective limit $\rho_{p}(0)=0$.
\end{lem}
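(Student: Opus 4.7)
The plan is to compute the boundary values of $f_{p}$ as $z \to x$ through $\C_{+}$ separately in the regions $(0,1)$, $(-1,0)$, and at the exceptional points $x \in \{0, \pm 1\}$; the range $|x| > 1$ is already handled by Lemma~\ref{lem:lem1}. The subtlety is to track which side of the negative real axis $1 - 1/z$ (or $1 + 1/z$) approaches, so that the principal branch~\eqref{eq:pow_func} assigns the correct argument to $(1 \mp 1/z)^{1/q}$.

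For the main case $x \in (0,1)$, I would pass to the limit factor by factor in~\eqref{eq:def_f_p}. As $z \to x$ with $z \in \C_{+}$, the formula $1/z = \bar z/|z|^{2}$ shows $\Im(1/z) < 0$, so $1/z \to 1/x$ from $\C_{-}$. Hence $1 + 1/z \to 1 + 1/x > 0$ and $((1 + 1/z)^{1/q} - 1)^{p-1}$ tends to the positive real $((1+1/x)^{1/q} - 1)^{p-1}$. On the other hand $1 - 1/z \to 1 - 1/x < 0$ with positive imaginary part, so by~\eqref{eq:pow_func} one has $(1 - 1/z)^{1/q} \to (1/x - 1)^{1/q}\,\ee^{\ii\pi/q}$. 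The limit $1 - \ee^{\ii\pi/q}(1/x - 1)^{1/q}$ lies in the open lower half-plane because $\sin(\pi/q) > 0$, which keeps the principal $(p-1)$-th power continuous here. Combining with $z^{p-1} \to x^{p-1}$ and taking imaginary parts reproduces exactly $\pi \rho_{p}(x)$ from~\eqref{eq:rho_p}.

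The case $x \in (-1,0)$ reduces to the previous one via the symmetries of Lemma~\ref{lem:lem1}: if $z \in \C_{+}$ with $z \to x$, then $-\bar z \in \C_{+}$ with $-\bar z \to -x \in (0,1)$, and the identities give $f_{p}(z) = -\overline{f_{p}(-\bar z)}$, so $\Im f_{p}(z) \to \pi \rho_{p}(-x) = \pi \rho_{p}(|x|)$. At $x = \pm 1$ the factor $1 \mp 1/z$ vanishes continuously through $\C_{+}$ and all remaining factors have real limits, forcing $\Im f_{p}(z) \to 0$, in agreement with $\rho_{p}(\pm 1) = 0$ read off directly from~\eqref{eq:rho_p}.

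The \emph{main obstacle} is the point $x = 0$, where $1/z \to \infty$ and the naive factorisation breaks down. I would handle it by passing to the alternative representation~\eqref{eq:f_p_alt_expr} derived in the proof of Lemma~\ref{lem:lem1}, and establishing the elementary bound $|z(1 \pm 1/z)^{1/q}| = \bigO(|z|^{1/p})$ as $z \to 0$ in $\C_{+}$, which forces $f_{p}(z) \to 0$. Matching this, the asymptotic $1 - \ee^{\ii\pi/q}(1/x - 1)^{1/q} \sim -\ee^{\ii\pi/q}x^{-1/q}$ as $x \to 0_{+}$ yields $\rho_{p}(x) = \bigO(x^{(p-1)/p})$, confirming $\rho_{p}(0) = 0$. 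Continuity of the extension at $0$ and $\pm 1$ follows by combining these limits with the continuity of $x \mapsto \rho_{p}(|x|)$ on $(0,1)$, which is clear from~\eqref{eq:rho_p}.
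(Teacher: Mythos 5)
Your proposal is correct and follows essentially the same route as the paper's proof: the direct boundary-value computation from \eqref{eq:def_f_p} on $(0,1)$ tracking that $1-1/z$ approaches the cut from $\C_{+}$ (giving the phase $\ee^{\ii\pi/q}$) while $1-\ee^{\ii\pi/q}(1/x-1)^{1/q}$ stays in $\C_{-}$, the reduction of negative $x$ via the symmetries of Lemma~\ref{lem:lem1}, and the use of \eqref{eq:f_p_alt_expr} with the bound $z(1\pm 1/z)^{1/q}=\mathcal{O}(z^{1/p})$ at the origin are exactly the paper's steps. One small repair: at $x=-1$ your claim that ``all remaining factors have real limits'' fails for non-integer $p$ (one has $z^{p-1}\to\ee^{\ii\pi(p-1)}$, and both bracketed terms in \eqref{eq:def_f_p} acquire the phase $\ee^{-\ii\pi(p-1)}$ because their bases approach the negative reals from $\C_{-}$, the phases cancelling only in the product), so $x=-1$ should instead be handled by the same symmetry reduction $f_{p}(z)=-\overline{f_{p}(-\overline{z})}$ that you already use on $(-1,0)$ --- which is precisely how the paper proceeds, restricting to $x\geq 0$ at the outset.
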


\begin{proof}
By~\eqref{eq:f_p_symm}, it suffices to consider $x\geq0$.
For $x\in(0,1]$, one computes from~\eqref{eq:def_f_p} and definition~\eqref{eq:pow_func} that
\begin{equation}
\lim_{\substack{z\to x \\ z\in\C_{+}}}f_{p}(z)=
 x^{p-1}\left[\left(\left(1+\frac{1}{x}\right)^{1/q}-1\right)^{p-1}-\left(1-\ee^{\ii\pi/q}\left(\frac{1}{x}-1\right)^{1/q}\right)^{p-1}\right].
\label{eq:f_p_bv_0<x<1}
\end{equation}
For $x=0$, the limit is $0$, as one can see from the  expression~\eqref{eq:f_p_alt_expr}, where both summands vanish for $z\to0$ since
\[
 z\left(1\pm\frac{1}{z}\right)^{1/q}=\mathcal{O}(z^{1/p}) \quad\mbox{ as } z\to0.
\]
Consequently, $f_{p}$ extends continuously to the interval $[-1,1]$ from~$\C_{+}$.

Since the first term on the right-hand side of~\eqref{eq:f_p_bv_0<x<1} is real, we have
\[
\lim_{\substack{z\to x \\ z\in\C_{+}}}\Im f_{p}(z)=
 -x^{p-1}\Im\left(1-\ee^{\ii\pi/q}\left(\frac{1}{x}-1\right)^{1/q}\right)^{p-1},
\]
which coincides with $\pi\rho_{p}(x)$, for $x\in(0,1]$, see~\eqref{eq:rho_p}. If $x>1$, we already know from~\eqref{eq:f_p_bv_x>1} that the limit of $f_{p}(z)$, for $z\to x$, exists and is real.
This proves~\eqref{eq:f_p_bv_x>0}.
\end{proof}

\begin{lem}\label{lem:lem3}
    For all $x\in(0,1)$, $\rho_{p}(x)>0$.
\end{lem}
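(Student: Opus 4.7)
The plan is to recast $\rho_{p}(x)$ as the (negated) imaginary part of a specific complex power, and then pin down the principal argument of the base tightly enough that the sign of the imaginary part can be read off directly from the sine function.

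First, fix $x \in (0,1)$ and put $r := (1/x-1)^{1/q} > 0$. Using the identity $-e^{\ii\pi/q} = e^{-\ii\pi/p}$, which is just the statement that $\pi + \pi/q \equiv -\pi/p \pmod{2\pi}$ (a consequence of $1/p+1/q=1$), the quantity inside the imaginary part of \eqref{eq:rho_p} can be rewritten as
\[
 w \;:=\; 1 - e^{\ii\pi/q} r \;=\; 1 + r e^{-\ii\pi/p}.
\]
Since $x^{p-1}>0$, the inequality $\rho_{p}(x)>0$ is equivalent to $\Im w^{p-1}<0$. On the principal branch \eqref{eq:pow_func} one has $w^{p-1} = |w|^{p-1} e^{\ii(p-1)\Arg w}$, so everything reduces to showing that $(p-1)\Arg w$ lies in $(-\pi,0)$, where $\sin$ is strictly negative.

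The key step, and the main obstacle, is to locate $\Arg w$ precisely in the interval $(-\pi/p,0)$. The upper bound is immediate: $\Im w = -r\sin(\pi/p) < 0$, so $w \in \C_{-}$ and $\Arg w \in (-\pi,0)$. For the sharper lower bound I use a rotation trick: multiplying $w$ by $e^{\ii\pi/p}$ yields
\[
 w\,e^{\ii\pi/p} \;=\; r + e^{\ii\pi/p},
\]
whose imaginary part is $\sin(\pi/p)>0$ independently of $r$. Hence $\Arg(w e^{\ii\pi/p}) \in (0,\pi)$. Since $\pi/p < \pi$ and $\Arg w \in (-\pi,0)$, no $2\pi$ branch adjustment is needed and $\Arg(w e^{\ii\pi/p}) = \Arg w + \pi/p$. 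This gives $\Arg w > -\pi/p$, and the claim $\Arg w \in (-\pi/p,0)$ follows.

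To conclude, use $(p-1)/p = 1/q$ to compute $(p-1)\Arg w \in (-(p-1)\pi/p,0) = (-\pi/q,0)$; since $q>1$ ensures $\pi/q < \pi$, this interval lies inside $(-\pi,0)$, on which $\sin$ is strictly negative. Therefore
\[
 \Im w^{p-1} \;=\; |w|^{p-1}\sin\bigl((p-1)\Arg w\bigr) \;<\; 0,
\]
and so $\rho_{p}(x) = -\frac{1}{\pi}x^{p-1}\Im w^{p-1} > 0$. Geometrically, the rotation $w \mapsto w e^{\ii\pi/p}$ turns the ray $\{1 + r e^{-\ii\pi/p} : r>0\}$ into a horizontal ray in the open upper half-plane at height $\sin(\pi/p)$, which makes the argument bounds transparent; without this reduction, controlling $\Arg w$ as $r$ runs over $(0,\infty)$ would require a separate monotonicity (or geometric-sector) argument.
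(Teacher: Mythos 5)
Your proof is correct, and it takes a genuinely cleaner route than the paper's. Both arguments share the same skeleton---reduce $\rho_p(x)>0$ to showing $(p-1)\Arg w\in(-\pi,0)$ for $w=1-\ee^{\ii\pi/q}r$, then conclude via $\sin<0$ there---but the way you pin down $\Arg w$ differs substantially. The paper works with $\arctan$ expressions for the argument and splits into the cases $p\geq2$ and $p\in(1,2)$, the latter requiring three further subcases according to the sign of $\Re(1-\ee^{\ii\pi/q}t)$; in the case $p\in(1,2)$ it only obtains the crude bound $\Arg w\in(-\pi,0)$ and compensates with the factor $p-1<1$. Your rotation trick---rewriting $w=1+r\ee^{-\ii\pi/p}$ via $-\ee^{\ii\pi/q}=\ee^{-\ii\pi/p}$ and observing that $w\ee^{\ii\pi/p}=r+\ee^{\ii\pi/p}$ has imaginary part $\sin(\pi/p)>0$ independently of $r$---yields the sharp enclosure $\Arg w\in(-\pi/p,0)$ uniformly for all $p>1$ in one stroke, eliminating the case split entirely. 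Your handling of the branch issue is also sound: since $\Arg w\in(-\pi,0)$ and $\pi/p\in(0,\pi)$, the sum $\Arg w+\pi/p$ lies in $(-\pi,\pi)$, so it must equal $\Arg(w\ee^{\ii\pi/p})\in(0,\pi)$ with no $2\pi$ correction, and $\Arg w\in(-\pi/p,0)\subset(-\pi,\pi]$ guarantees you are on the principal branch \eqref{eq:pow_func} throughout. What the paper's approach buys is only that it stays with the raw $\arctan$ formulas for the argument; what yours buys is uniformity in $p$, a sharper bound $(p-1)\Arg w\in(-\pi/q,0)$ (which in fact recovers the paper's estimate for $p\geq2$ as a special case), and a transparent geometric picture of the ray $\{1+r\ee^{-\ii\pi/p}: r>0\}$.
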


\begin{proof}
By definition~\eqref{eq:rho_p}, it suffices to verify that 
\[
\Im\left(1-\ee^{\ii\pi/q}\left(\frac{1}{x}-1\right)^{1/q}\right)^{p-1}<0
\]
for all $x\in(0,1)$. Denoting $t:=(1/x-1)^{1/q}$ and using~\eqref{eq:pow_func}, this inequality is true if
\begin{equation}
\sin\left((p-1)\Arg\left(1-\ee^{\ii\pi/q}t\right)\right)<0
\label{eq:sin_ineq}
\end{equation}
for all $t>0$. We verify~\eqref{eq:sin_ineq} separately for two cases when $p\in(1,2)$ and $p\geq2$.

Suppose first that $p\geq2$. Then $\pi/2\leq\pi/q<\pi$, $\Re(1-\ee^{\ii\pi/q}t)>0$, and we find
\[
0>\Arg\left(1-\ee^{\ii\pi/q}t\right)=\arctan\left(\frac{-t\sin(\pi/q)}{1-t\cos(\pi/q)}\right)>\arctan\left(\tan\left(\frac{\pi}{q}\right)\right)=\frac{\pi}{q}-\pi=-\frac{\pi}{p}.
\]
Therefore the argument of the sine in~\eqref{eq:sin_ineq} satisfies
\[
0>(p-1)\Arg\left(1-\ee^{\ii\pi/q}t\right)>\frac{1-p}{p}\pi=-\frac{\pi}{q}>-\pi
\]
and so inequality~\eqref{eq:sin_ineq} is true.

Second, suppose $p\in(1,2)$. Then $0<\pi/q<\pi/2$ and $\Re(1-\ee^{\ii\pi/q}t)=1-t\cos(\pi/q)$ changes sign as function of $t>0$. Let us denote by $t_{0}$ the unique positive point for which $\Re(1-\ee^{\ii\pi/q}t_0)=0$. If $t<t_{0}$, then $\Re(1-\ee^{\ii\pi/q}t)>0$, and we have
\[
-\frac{\pi}{2}<\Arg\left(1-\ee^{\ii\pi/q}t\right)=\arctan\left(\frac{-t\sin(\pi/q)}{1-t\cos(\pi/q)}\right)<0,
\]
because the argument of the inverse tangent is negative. If $t>t_{0}$, then $\Re(1-\ee^{\ii\pi/q}t)<0$, and we have
\[
-\frac{\pi}{2}>\Arg\left(1-\ee^{\ii\pi/q}t\right)=-\pi+\arctan\left(\frac{-t\sin(\pi/q)}{1-t\cos(\pi/q)}\right)>-\pi,
\]
because the argument of the inverse tangent is positive. If $t=t_0$, 
$\Arg(1-\ee^{\ii\pi/q}t)=-\pi/2$ because $\Im\Arg(1-\ee^{\ii\pi/q}t)=-t\sin(\pi/q)<0$. In total, we see that 
\[
-\pi<-(p-1)\pi<(p-1)\Arg\left(1-\ee^{\ii\pi/q}\right)<0,
\]
where the assumption $p\in(1,2)$ was used. Therefore the inequality~\eqref{eq:sin_ineq} remains true. 
\end{proof}

\begin{proof}[Proof of Theorem~\ref{thm:main}]

First, we prove that $f_{p}$ is Herglotz--Nevanlinna.
By Lemma~\ref{lem:lem1}, function $f_{p}$ is analytic on $\C_{+}$. Therefore $f_{p}$ is Herglotz--Nevanlinna if $\Im f_{p}(z)\geq0$ for all $z\in\C_{+}$. According to Lemmas~\ref{lem:lem1} and~\ref{lem:lem2}, $f_{p}$ extends continuously to~$\R$ from~$\C_{+}$ and is bounded on $\C_{+}$, the boundedness of $f_{p}$ by $\infty$ is a~consequence of~\eqref{eq:f_p_z_large}. Thus, utilizing Theorem~\ref{thm:p-l}, it suffices to show that the limit values of $\Im f_{p}(z)$, as $z\to x$ from $\C_{+}$, are nonnegative for all $x\in\R$. This is justified by Lemmas~\ref{lem:lem2} and~\ref{lem:lem3}.

As the positivity of $\rho_p$ in $(0,1)$ is established by Lemma~\ref{lem:lem3}, it remains to prove~\eqref{eq:int-repre}.
Since $f_{p}$ is Herglotz--Nevanlinna Theorem~\ref{thm:h-n} applies, providing us with numbers $a_p\in\R$, $b_p\geq0$, and measure $\mu_p$ so that the integral representation of claim~(i) holds. By claim~(ii) of Theorem~\ref{thm:h-n} and Lemmas~\ref{lem:lem2}, the measure $\mu_p$ is supported in $[-1,1]$, purely absolutely continuous, and its density reads
\[
 \frac{\dd\mu_p}{\dd x}(x)=\rho_{p}(|x|)
\]
for $x\in(-1,1)$. As the density $\rho_{p}$ is continuous on $[0,1]$, $\mu$ is a finite measure. Lastly, the claim~(iii) of Theorem~\ref{thm:h-n} combined with the expansion~\eqref{eq:f_p_z_large} implies
\[
 b_p=0, \quad a_p=\int_{-1}^{1}\frac{t\dd\mu_p(t)}{1+t^{2}}, \quad\mbox{ and moreover }\; \mu_p(\R)=\left(\frac{p-1}{p}\right)^{p}.
\]
In total, we arrive at the integral representation of the form~\eqref{eq:int-repre}, as claimed. The proof of Theorem~\ref{thm:main} is complete.
\end{proof}

\subsection{Proof of Corollary~\ref{cor:abs-mon}}\label{subsec:2.4}

One infers from~\eqref{eq:int-repre} the integral formula
\begin{equation}
 \frac{1}{x^{p}}\omega_{p}\left(\frac{1}{x}\right)=-\frac{1}{x}f_{p}\left(\frac{1}{x}\right)=\int_{-1}^{1}\frac{\rho_{p}(|t|)}{1-tx}\dd t
\label{eq:int_repre_alt}
\end{equation}
for all $x\in(-1,1)$. This is an infinitely-differentiable function of $x$ on $(-1,1)$ and, by the dominated convergence, its $n$th derivative satisfies
\[
\frac{\dd^{n}}{\dd x^{n}}\left[\frac{1}{x^{p}}\omega_{p}\left(\frac{1}{x}\right)\right]=n!\int_{-1}^{1}\frac{t^{n}\rho_{p}(|t|)}{(1-tx)^{n+1}}\dd t=n!\int_{0}^{1}\left(\frac{1}{(1-tx)^{n+1}}+\frac{(-1)^{n}}{(1+tx)^{n+1}}\right)t^{n}\rho_{p}(t)\dd t
\]
for all $x\in(-1,1)$. Since the expression in the round brackets in the last integral is positive for all $t,x\in(0,1)$ and the function in~\eqref{eq:int_repre_alt} is continuous on $[0,1]$, it is absolutely monotonic on $[0,1]$.

Expanding the integrand on the right-hand side of \eqref{eq:int_repre_alt} into the power series in the variable $xt$ and interchanging the sum and the integral yields
\[
\frac{1}{x^{p}}\omega_{p}\left(\frac{1}{x}\right)=\sum_{n=0}^{\infty}x^{n}\int_{-1}^{1}t^{n}\rho_{p}(|t|)\dd t %=2\sum_{k=0}^{\infty}x^{2k}\int_{0}^{1}t^{2k}\rho_{p}(t)\dd t,
\]
where the series on the right converges for all $x\in(-1,1)$. Since the last integral vanishes for $n$ odd, this yields the claimed expansion~\eqref{eq:om_expan} for $x\in(0,1)$. The extension of~\eqref{eq:om_expan} to $x=1$ follows from Littlewood's Tauberian theorem since
\[
 m_{2k}^{(p)}\leq\|\rho_{p}\|_{\infty}\int_{-1}^{1}t^{2k}\dd t=\mathcal{O}\left(\frac{1}{k}\right), \;\mbox{ as } k\to\infty,
\]
where $\|\rho_{p}\|_{\infty}:=\sup\{\rho_{p}(t)\mid t\in[0,1]\}$ is finite because $\rho_{p}$ is continuous on $[0,1]$.
\qed

\section{A combinatorial formula for the moments}\label{sec:comb-form}

\begin{prop}\label{prop:comb_formula}
For any $p>1$ and $k\in\N_{0}$, we have
\begin{equation}
 m_{2k}^{(p)}=2\left(\frac{p-1}{p}\right)^{p-1}\,\sum_{n=1}^{2k+1}\binom{p-1}{n}\!
 \sum_{\substack{r_{1},\dots,r_{n}\geq1 \\ r_{1}+\dots+r_{n}=2k+1}}\frac{\left(1/p\right)_{r_1}}{(r_1+1)!}\dots \frac{\left(1/p\right)_{r_n}}{(r_n+1)!},
\label{eq:comb_formula}
\end{equation}
where $(\alpha)_{n}:=\alpha(\alpha+1)\dots(\alpha+n-1)$ is the Pochhamer symbol.
\end{prop}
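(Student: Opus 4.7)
The plan is to identify $m_{2k}^{(p)}$ with a Taylor coefficient of $x^{-p}\omega_{p}(1/x)$, which by Corollary~\ref{cor:abs-mon} equals $\sum_{k\geq0}m_{2k}^{(p)}x^{2k}$, and then reshape~\eqref{eq:def_om_p} into a generating-function form in which the Pochhammer symbol from~\eqref{eq:comb_formula} appears naturally. The key auxiliary object is
\[
\phi(x):=\sum_{r=0}^{\infty}\frac{(1/p)_{r}}{(r+1)!}\,x^{r},
\]
which admits a closed form obtained by termwise integration of the binomial series for $(1-t)^{-1/p}$:
\[
x\,\phi(x)=\int_{0}^{x}(1-t)^{-1/p}\,\dd t=\frac{p}{p-1}\bigl(1-(1-x)^{1/q}\bigr),
\]
using $1-1/p=1/q$. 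Replacing $x$ by $-x$ yields the companion identity $x\,\phi(-x)=\frac{p}{p-1}\bigl((1+x)^{1/q}-1\bigr)$.

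Substituting $z=1/x$ in~\eqref{eq:def_om_p} and using these two identities gives, for sufficiently small $x>0$,
\[
\frac{1}{x^{p}}\omega_{p}\!\left(\frac{1}{x}\right)=\left(\frac{p-1}{p}\right)^{p-1}\frac{\phi(x)^{p-1}-\phi(-x)^{p-1}}{x}.
\]
Since the numerator on the right is twice the odd part of $\phi(x)^{p-1}$ as a power series in $x$, comparison with the expansion~\eqref{eq:om_expan} yields
\[
m_{2k}^{(p)}=2\left(\frac{p-1}{p}\right)^{p-1}[x^{2k+1}]\,\phi(x)^{p-1}.
\]

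To extract this coefficient, I write $\phi(x)=1+\psi(x)$ with $\psi(x):=\sum_{r\geq1}\frac{(1/p)_{r}}{(r+1)!}x^{r}$ and invoke the generalized binomial theorem $\phi(x)^{p-1}=\sum_{n=0}^{\infty}\binom{p-1}{n}\psi(x)^{n}$, valid for $x$ near $0$ because $\psi(0)=0$. The Cauchy product rule then gives
\[
[x^{2k+1}]\psi(x)^{n}=\sum_{\substack{r_{1},\dots,r_{n}\geq1\\ r_{1}+\dots+r_{n}=2k+1}}\prod_{i=1}^{n}\frac{(1/p)_{r_{i}}}{(r_{i}+1)!},
\]
and the constraint $r_{i}\geq1$ forces the outer sum to terminate at $n=2k+1$. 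Assembling the pieces produces~\eqref{eq:comb_formula}. The only genuinely non-routine step is spotting the closed form for $\phi$; everything else is bookkeeping on convergent power series near $x=0$.
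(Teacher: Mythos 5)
Your proposal is correct and follows essentially the same route as the paper: your $\phi(\mp x)$ is, up to the factor $\frac{p-1}{p}$, exactly the series $\sum_{m\geq0}\binom{1/q}{m+1}z^{-m}$ that the paper expands from~\eqref{eq:f_p_alt_expr} (with $x=1/z$), and the generalized binomial theorem, the composition/Cauchy-product bookkeeping with termination at $n=2k+1$, and the odd-part cancellation between the two branches all coincide with the paper's computation. The only cosmetic differences are that you obtain the closed form of $\phi$ by integrating $(1-t)^{-1/p}$ and match coefficients against~\eqref{eq:om_expan} directly, whereas the paper matches against $f_{p}(z)=-\sum_{k\geq0}m_{2k}^{(p)}z^{-2k-1}$ at infinity.
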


\begin{proof}
It follows from~\eqref{eq:int-repre} and~\eqref{eq:om_expan} that, for $|z|>1$, we have
\begin{equation}
f_{p}(z)=-\sum_{k=0}^{\infty}\frac{m_{2k}^{(p)}}{z^{2k+1}}.
\label{eq:f_p_power_series}
\end{equation}
We prove the formula~\eqref{eq:comb_formula} by expanding the analytic function $f_{p}$ in a neighborhood of $\infty$. First, we expand each of the two terms from the defining expression~\eqref{eq:f_p_alt_expr}. For the first term, assuming $|z|$ sufficiently large, we get
\begin{align*}
\left(z\left(1+\frac{1}{z}\right)^{1/q}-z\right)^{p-1}&=\left(\sum_{m=0}^{\infty}\binom{1/q}{m+1}\frac{1}{z^{m}}\right)^{p-1}\\
&=\frac{1}{q^{p-1}}\left(1+\sum_{m=1}^{\infty}\frac{(1-1/q)_{m}}{(m+1)!}\frac{(-1)^{m}}{z^{m}}\right)^{p-1}\\
&=\frac{1}{q^{p-1}}\sum_{n=0}^{\infty}\binom{p-1}{n}\left(\sum_{m=1}^{\infty}\frac{(1/p)_{m}}{(m+1)!}\frac{(-1)^{m}}{z^{m}}\right)^{n}\\
&=\frac{1}{q^{p-1}}\left(1+\sum_{n=1}^{\infty}\binom{p-1}{n}\sum_{k=1}^{\infty}\Gamma_{k}^{(n)}\,\frac{(-1)^{k}}{z^{k}}\right),
\end{align*}
where 
\[
\Gamma_{k}^{(n)}:=\sum_{\substack{r_{1},\dots,r_{n}\geq1 \\ r_{1}+\dots+r_{n}=k}}\frac{\left(1/p\right)_{r_1}}{(r_1+1)!}\dots \frac{\left(1/p\right)_{r_n}}{(r_n+1)!}.
\]
Analogously, we deduce
\[
\left(z-z\left(1-\frac{1}{z}\right)^{1/q}\right)^{p-1}=
\frac{1}{q^{p-1}}\left(1+\sum_{n=1}^{\infty}\binom{p-1}{n}\sum_{k=1}^{\infty}\Gamma_{k}^{(n)}\,\frac{1}{z^{k}}\right).
\]
Notice that $\Gamma_{s}^{(n)}=0$ if $n>s$. Then, by~\eqref{eq:f_p_alt_expr}, we have
\[
f_{p}(z)=\frac{1}{q^{p-1}}\sum_{n=1}^{\infty}\binom{p-1}{n}\sum_{s=n}^{\infty}\Gamma_{s}^{(n)}\,\frac{(-1)^{s}-1}{z^{s}}
%=-\frac{2}{q^{p-1}}\sum_{n=1}^{\infty}\binom{p-1}{n}\sum_{s=0}^{\infty}\frac{\Gamma_{2s+1}^{(n)}}{z^{2s+1}}
=-\frac{2}{q^{p-1}}\sum_{k=0}^{\infty}\frac{1}{z^{2k+1}}\sum_{n=1}^{2k+1}\binom{p-1}{n}\Gamma_{2k+1}^{(n)}
\]
for all $z\in\C$ with $|z|$ sufficiently large. Comparing the coefficients by the same powers of $z$ on both sides in~\eqref{eq:f_p_power_series} and recalling that $1/p+1/q=1$, we arrive at the statement.
\end{proof}

\begin{rem}
 Formulas \eqref{eq:f_p_power_series} and \eqref{eq:comb_formula} generalize~\eqref{eq:f_p_z_large} to a complete asymptotic expansion.
\end{rem}

\begin{rem}\label{rem:mom_p_int}
If $p>1$ is an integer, formula~\eqref{eq:comb_formula} simplifies to
 \[
         m_{2k}^{(p)}=2\sum_{j=0}^{p-1} (-1)^{j+p} \binom{p-1}{j}\binom{j(p-1)/p}{2k+p},
 \]
which is a consequence of the expansion~\eqref{eq:LHS-expansion}.
\end{rem}

\begin{rem}
A straightforward computation based on~\eqref{eq:comb_formula} gives the first three even moments:
\begin{align*}
m_{0}^{(p)}&=\left(\frac{p-1}{p}\right)^{p},\\
m_{2}^{(p)}&=\left(\frac{p-1}{p}\right)^{p}\frac{3p-1}{8p},\\
m_{4}^{(p)}&=\left(\frac{p-1}{p}\right)^{p}\frac{(5p-1)(43p^{2}+p-6)}{1152p^{3}}.
\end{align*}
\end{rem}

\subsection*{Acknowledgment}
F.~{\v S}. acknowledges the support of the EXPRO grant No.~20-17749X of the Czech Science Foundation.

\bibliographystyle{acm}
%\bibliography{fkp-conject}

\begin{thebibliography}{10}

\bibitem{akh_21}
{\sc Akhiezer, N.~I.}
\newblock {\em The classical moment problem and some related questions in
  analysis}, vol.~82 of {\em Classics in Applied Mathematics}.
\newblock Society for Industrial and Applied Mathematics (SIAM), Philadelphia,
  PA, 2021.

\bibitem{ber-has-sno_20}
{\sc Behrndt, J., Hassi, S., and de~Snoo, H.}
\newblock {\em Boundary value problems, {W}eyl functions, and differential
  operators}, vol.~108 of {\em Monographs in Mathematics}.
\newblock Birkh\"auser/Springer, Cham, 2020.

\bibitem{cia-ron_18}
{\sc Ciaurri, O., and Roncal, L.}
\newblock Hardy's inequality for the fractional powers of a discrete
  {L}aplacian.
\newblock {\em J.~Anal. 26}, 2 (2018), 211--225.

\bibitem{das-etal_25}
{\sc Das, B., Chakraborty, S.~K., Sadhu, R., and Manna, A.}
\newblock Improvement of some discrete {H}ardy inequalities with variants.
\newblock {\em Glasg. Math.~J. 67}, 1 (2025), 114--130.

\bibitem{debra_68}
{\sc de~Branges, L.}
\newblock {\em Hilbert spaces of entire functions}.
\newblock Prentice-Hall, Inc., Englewood Cliffs, NJ, 1968.

\bibitem{dev-fra-pin_14}
{\sc Devyver, B., Fraas, M., and Pinchover, Y.}
\newblock Optimal {H}ardy weight for second-order elliptic operator: an answer
  to a problem of {A}gmon.
\newblock {\em J. Funct. Anal. 266}, 7 (2014), 4422--4489.

\bibitem{ell_26}
{\sc Elliott, E.~B.}
\newblock A {S}imple {E}xposition of {S}ome {R}ecently {P}roved {F}acts as to
  {C}onvergency.
\newblock {\em J. London Math. Soc. 1}, 2 (1926), 93--96.

\bibitem{fis_diss24}
{\sc Fischer, F.}
\newblock {\em Hardy Inequalities on Graphs}.
\newblock Doctoral dissertation, Universität Potsdam, 2024.

\bibitem{fis_24}
{\sc Fischer, F.}
\newblock On the optimality and decay of {$p$}-{H}ardy weights on graphs.
\newblock {\em Calc. Var. Partial Differential Equations 63}, 7 (2024), Paper
  No. 162, 37.

\bibitem{fis-kel-pog_23}
{\sc Fischer, F., Keller, M., and Pogorzelski, F.}
\newblock An improved discrete {$p$}-{H}ardy inequality.
\newblock {\em Integral Equations Operator Theory 95}, 4 (2023), Paper No. 24,
  17.

\bibitem{ger-kre-sta_25b}
{\sc Gerhat, B., Krej{\v c}i{\v r}{\' i}k, D., and {\v S}tampach, F.}
\newblock Criticality transition for positive powers of the discrete
  {L}aplacian on the half line.
\newblock To appear in {\em Rev. Mat. Iberoam.\/} (2025).
%\newblock Preprint on arXiv:2307.09919 [math.SP].

\bibitem{ger-kre-sta_25a}
{\sc Gerhat, B., Krej{\v c}i{\v r}{\' i}k, D., and {\v S}tampach, F.}
\newblock An improved discrete {R}ellich inequality on the half-line.
\newblock To appear in {\em Israel J. Math.\/} (2025).
%\newblock Preprint on arXiv:2206.11007 [math.SP].

\bibitem{grad-ryz_07}
{\sc Gradshteyn, I.~S., and Ryzhik, I.~M.}
\newblock {\em Table of integrals, series, and products},
\newblock Elsevier/Academic Press, Amsterdam, 2007.

\bibitem{gup_22}
{\sc Gupta, S.}
\newblock Discrete weighted {H}ardy inequality in 1-{D}.
\newblock {\em J. Math. Anal. Appl. 514}, 2 (2022), Paper No. 126345, 19.

\bibitem{gup_24}
{\sc Gupta, S.}
\newblock One-dimensional discrete {H}ardy and {R}ellich inequalities on
  integers.
\newblock {\em J. Fourier Anal. Appl. 30}, 2 (2024), Paper No. 15, 22.

\bibitem{har-lit-pol_52}
{\sc Hardy, G.~H., Littlewood, J.~E., and P\'olya, G.}
\newblock {\em Inequalities}.
\newblock Cambridge University Press, 1952.

\bibitem{hua-ye_24}
{\sc Huang, X., and Ye, D.}
\newblock One-dimensional sharp discrete {H}ardy-{R}ellich inequalities.
\newblock {\em J. Lond. Math. Soc. (2) 109}, 1 (2024), Paper No. e12838, 26.

\bibitem{kel-nit_23}
{\sc Keller, M., and Nietschmann, M.}
\newblock Optimal {H}ardy inequality for fractional {L}aplacians on the
  integers.
\newblock {\em Ann. Henri Poincar\'e 24}, 8 (2023), 2729--2741.

\bibitem{kel-pin-pog_18a}
{\sc Keller, M., Pinchover, Y., and Pogorzelski, F.}
\newblock An improved discrete {H}ardy inequality.
\newblock {\em Amer. Math. Monthly 125}, 4 (2018), 347--350.

\bibitem{kel-pin-pog_18}
{\sc Keller, M., Pinchover, Y., and Pogorzelski, F.}
\newblock Optimal {H}ardy inequalities for {S}chr\"odinger operators on graphs.
\newblock {\em Comm. Math. Phys. 358}, 2 (2018), 767--790.

\bibitem{koo_98}
{\sc Koosis, P.}
\newblock {\em The logarithmic integral. {I}}, vol.~12 of {\em Cambridge
  Studies in Advanced Mathematics}.
\newblock Cambridge University Press, Cambridge, 1998.

\bibitem{kre-lap-sta_22}
{\sc Krej{\v c}i{\v r}{\' i}k, D., Laptev, A., and {\v S}tampach, F.}
\newblock Spectral enclosures and stability for non-self-adjoint discrete
  {S}chr\"odinger operators on the half-line.
\newblock {\em Bull. Lond. Math. Soc. 54}, 6 (2022), 2379--2403.

\bibitem{kre-sta_22}
{\sc Krej{\v c}i{\v r}{\' i}k, D., and {\v S}tampach, F.}
\newblock A sharp form of the discrete {H}ardy inequality and the
  {K}eller--{P}inchover--{P}ogorzelski inequality.
\newblock {\em Amer. Math. Monthly 129}, 3 (2022), 281--283.

\bibitem{kuf-lec-per_06}
{\sc Kufner, A., Maligranda, L., and Persson, L.-E.}
\newblock The prehistory of the {H}ardy inequality.
\newblock {\em Amer. Math. Monthly 113}, 8 (2006), 715--732.

\bibitem{lef_20}
{\sc Lef\`evre, P.}
\newblock A short direct proof of the discrete {H}ardy inequality.
\newblock {\em Arch. Math. (Basel) 114}, 2 (2020), 195--198.

\bibitem{rem_18}
{\sc Remling, C.}
\newblock {\em Spectral theory of canonical systems}, vol.~70 of {\em De
  Gruyter Studies in Mathematics}.
\newblock De Gruyter, Berlin, 2018.

\bibitem{pra-dur_24}
{\sc Roychowdhury, P., and Suragan, D.}
\newblock Improvement of the discrete {H}ardy inequality.
\newblock {\em Bull. Sci. Math. 195} (2024), Paper No. 103468, 12.

\bibitem{sim_19}
{\sc Simon, B.}
\newblock {\em Loewner's theorem on monotone matrix functions}, vol.~354 of
  {\em Grundlehren der mathematischen Wissenschaften [Fundamental Principles of
  Mathematical Sciences]}.
\newblock Springer, Cham, 2019.

\bibitem{sta-wac_prep}
{\sc {\v S}tampach, F., and Waclawek, J.}
\newblock Optimal discrete {H}ardy--{R}ellich--{B}irman inequalities.
\newblock Submitted; preprint on arXiv:2405.07742v1 [math.CA].

\bibitem{tes_00}
{\sc Teschl, G.}
\newblock {\em Jacobi operators and completely integrable nonlinear lattices},
  vol.~72 of {\em Mathematical Surveys and Monographs}.
\newblock American Mathematical Society, Providence, RI, 2000.

\bibitem{tes_14}
{\sc Teschl, G.}
\newblock {\em Mathematical methods in quantum mechanics}, vol.~157
  of {\em Graduate Studies in Mathematics}.
\newblock American Mathematical Society, Providence, RI, 2014.

\bibitem{wid_41}
{\sc Widder, D.~V.}
\newblock {\em The {L}aplace {T}ransform}, vol.~6 of {\em Princeton
  Mathematical Series}.
\newblock Princeton University Press, Princeton, NJ, 1941.

\end{thebibliography}

\end{document}